\newtheorem{Thm}{Theorem}
\newtheorem{Lem}{Lemma}
\theoremstyle{definition}
\theoremstyle{remark}
\newcommand{\cpx}[1]{\ensuremath{\mathbf{#1}}}    
\newcommand{\digraph}[1]{\mathcal{G}(#1)}
\newcommand{\morsegraph}[2]{\mathcal{G}(#1)^{#2}}
\newcommand{\sigmamap}{\mathcal{S}}
\begin{document}

\author{Emil Sk\"oldberg}
\title{Algebraic Morse theory and homological perturbation theory}
\address{School of Mathematics, Statistics and Applied Mathematics\\
         National University of Ireland, Galway\\
         Ireland}

\email{emil.skoldberg@nuigalway.ie}
\urladdr{http://www.maths.nuigalway.ie/~emil/}

\date{\today}

\begin{abstract}
  We show that the main result of algebraic Morse theory can be
  obtained as a consequence of the perturbation lemma of Brown
  and Gugenheim.
\end{abstract}

\subjclass{Primary 18G35; Secondary 55U15}

\maketitle

\section{Introduction}

Robin Forman introduced discrete Morse theory in \cite{forman:morse}
as a combinatorial adaptation of the classical Morse theory suited for
studying the topology of CW-complexes.
Its fundamental idea is also applicable in purely algebraical situations
(see e.g.\ \cite{jonsson:graphs}, \cite{kozlov:morse},
\cite{joellenbeck_welker:morse}, \cite{skoldberg:morse}).

Homological perturbation theory on the other hand builds on the
perturbation lemma \cite{brown:ezthm}, \cite{gugenheim:ccpx}.
In addition to its applications in algebraic topology, it has
also found uses in e.g.\ the study of group
cohomology \cite{lambe:hh}, \cite{huebschmann:nilpotent_groups},
resolutions in commutative
algebra \cite{johansson_lambe_skoldberg:resolutions} as well
as in operadic settings, \cite{berglund:perturbation_theory_operads}.

In this note we show how to derive the main result of algebraic
Morse theory from the perturbation lemma. In related work, Berglund
\cite{berglund:hpt_morse}, has also treated connections between
algebraic Morse theory and homological perturbation theory.

\section{Definitions}

We will briefly review the definitions of the main objects of study.


A \emph{contraction} is a diagram of chain complexes of (left or right)
modules over a ring $R$
\begin{center}
  \begin{tikzpicture}
    \matrix (n) [matrix of math nodes, column sep=2em,
    nodes={asymmetrical rectangle}]
    { \cpx{D} & \cpx{C} \\ };

    \path[-stealth] ($(n-1-1.east)+(0,0.1)$) edge node [auto] {$g$}
    ($(n-1-2.west)+(0,0.1)$);

    \path[-stealth] ($(n-1-2.west)-(0,0.1)$) edge node [auto] {$f$}
    ($(n-1-1.east)-(0,0.1)$);

    \draw[-stealth] (n-1-2) to [out=30, in=330, looseness=5]
    node [auto] {$h$} (n-1-2);
  \end{tikzpicture}
\end{center}
where $f$ and $g$ are chain maps and $h$ is a degree 1 map satisfying
the identities
\[
fg = 1,\quad gf = 1 + dh + hd
\]
and
\[
fh = 0, \quad  hg = 0, \quad h^2 = 0.
\]
A contraction is \emph{filtered} if there is a bounded
below exhaustive filtration on the complexes which is preserved by
the maps $f$, $g$ and $h$.
A \emph{perturbation} of a chain complex $\cpx{C}$ is a map
$t: \cpx{C} \rightarrow \cpx{C}$ of degree $-1$ such that
$(d+t)^2 = 0$. Given a perturbation $t$ on $\cpx{C}$, we let
$\cpx{C}^t$ be the complex obtained by equipping $\cpx{C}$
with the new differential $d+t$.

We can now state the perturbation lemma.
\begin{Thm}[Brown, Gugenheim]
  Given a filtered contraction
  \begin{center}
    \begin{tikzpicture}
      \matrix (n) [matrix of math nodes, column sep=2em,
      nodes={asymmetrical rectangle}]
      { \cpx{D} & \cpx{C} \\ };

      \path[-stealth] ($(n-1-1.east)+(0,0.1)$) edge node [auto] {$g$}
        ($(n-1-2.west)+(0,0.1)$);

      \path[-stealth] ($(n-1-2.west)-(0,0.1)$) edge node [auto] {$f$}
        ($(n-1-1.east)-(0,0.1)$);

      \draw[-stealth] (n-1-2) to [out=30, in=330, looseness=5]
        node [auto] {$h$} (n-1-2);
  \end{tikzpicture}
  \end{center}
  and a filtration lowering perturbation $t$ of $\cpx{C}$, the diagram
  \begin{center}
    \begin{tikzpicture}
      \matrix (n) [matrix of math nodes, column sep=2em,
      nodes={asymmetrical rectangle}]
      { \cpx{D}^{t'} & \cpx{C}^{t} \\ };

      \path[-stealth] ($(n-1-1.east)+(0,0.1)$) edge node [auto] {$g'$}
        ($(n-1-2.west)+(0,0.1)$);

      \path[-stealth] ($(n-1-2.west)-(0,0.1)$) edge node [auto] {$f'$}
        ($(n-1-1.east)-(0,0.1)$);

      \draw[-stealth] (n-1-2) to [out=30, in=330, looseness=5]
        node [auto] {$h'$} (n-1-2);
  \end{tikzpicture}
  \end{center}
  where
  \[
  f' = f + f\sigmamap h, \quad
  g' = g + h\sigmamap g, \quad
  h' = h + h\sigmamap h, \quad
  t' = f \sigmamap g
  \]
  and
  \[
  \sigmamap = \sum_{n=0}^{\infty}t(ht)^n
  \]
  defines a contraction.
\end{Thm}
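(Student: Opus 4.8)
The plan is to check directly that the perturbed data $(f', g', h', t')$ satisfies every defining identity of a contraction, arranging the computation so that the combinatorial identities of the given contraction carry the load. The starting point is convergence and the filtration: since the filtration on $\cpx{C}$ is bounded below and exhaustive, since $t$ lowers it, and since $h$, $f$, $g$ preserve it, the operator $t(ht)^n$ maps $F_p$ into $F_{p-n-1}$, so on each filtration stage all but finitely many summands of $\sigmamap = \sum_{n \geq 0} t(ht)^n$ vanish. Hence $\sigmamap$ is a well-defined degree $-1$ map which \emph{lowers} the filtration by at least one; consequently $t' = f\sigmamap g$ is a well-defined degree $-1$ map lowering the filtration on $\cpx{D}$, and $f'$, $g'$, $h'$ are well defined and filtration preserving. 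I would also record the recursions
\[
\sigmamap = t + th\sigmamap = t + \sigmamap ht,
\]
immediate from the geometric series, and rewrite the perturbation condition as $dt + td + t^2 = 0$; the recursions are the device for pushing a $\sigmamap$ past a differential in the later steps.

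Next I would dispatch the ``formal'' identities, namely those involving no differential. Expanding $f'g'$, $f'h'$, $h'g'$ and $(h')^2$ and using only $fg = 1$ together with the side conditions $fh = hg = h^2 = 0$, every cross term is killed; for instance $f'g' = fg + fh\sigmamap g + f\sigmamap hg + f\sigmamap h^2\sigmamap g = 1$, and similarly $f'h' = 0$, $h'g' = 0$, $(h')^2 = 0$. This step is short and uses nothing about $t$ or $d$.

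The substance lies in the three differential-sensitive assertions: that $(d+t')^2 = 0$, so that $\cpx{D}^{t'}$ is a complex and $t'$ a genuine perturbation; that $f'$ and $g'$ are chain maps between $\cpx{C}^t$ and $\cpx{D}^{t'}$, i.e.\ $(d+t')f' = f'(d+t)$ and $g'(d+t') = (d+t)g'$; and that $g'f' = 1 + (d+t)h' + h'(d+t)$. For each of these I would substitute the definitions, expand, and reduce every occurring monomial using $fd = df$, $gd = dg$, $fg = 1$, $gf = 1 + dh + hd$, the side conditions, $dt + td + t^2 = 0$, and the $\sigmamap$-recursions, used to trade a $\sigmamap$ adjacent to a $d$ or a $t$ for ``$t$ plus a term with $\sigmamap$ one step deeper'', so that the remaining terms telescope away. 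It is cleaner to route the bookkeeping through the resolvents $\Psi = \sum_{n \geq 0}(ht)^n$ and $\Phi = \sum_{n \geq 0}(th)^n$, which converge for the same filtration reason and satisfy $\Psi(1-ht) = (1-ht)\Psi = 1$ and $\Phi(1-th) = (1-th)\Phi = 1$; then $f' = f\Phi$, $g' = \Psi g$, $h' = h\Phi = \Psi h$ and $\sigmamap = t\Psi = \Phi t$, the commutators of $d$ with $\Psi$ and $\Phi$ acquire closed forms such as $d\Psi - \Psi d = \Psi(dht - htd)\Psi$, and these feed directly into the three identities.

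The main obstacle I anticipate is precisely this last part: not a single conceptual difficulty, but controlling the proliferation of terms and verifying that the cancellations are complete. I expect $g'f' = 1 + (d+t)h' + h'(d+t)$ together with $(d+t')^2 = 0$ to demand the most care, since these are where the $\sigmamap$-recursions must be iterated repeatedly; a subsidiary point is to justify that the infinite sums involved may be freely composed and rearranged, which once more reduces to the local finiteness supplied by the filtration.
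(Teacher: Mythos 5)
The paper does not prove this theorem at all: it is quoted as the Brown--Gugenheim perturbation lemma with citations to the original sources, so there is no internal proof to measure your argument against. That said, your plan is the standard direct verification and its outline is sound. The convergence argument is right: $t$ lowers the filtration and $h$ preserves it, so $t(ht)^n$ sends $F_p$ into $F_{p-n-1}$, and bounded-below plus exhaustive gives local finiteness of $\sigmamap$ and licenses the rearrangements you invoke at the end. Your dispatch of the four identities not involving the differential is complete as stated, since every cross term in $f'g'$, $f'h'$, $h'g'$, $(h')^2$ contains one of $fh$, $hg$, $h^2$. The identities $f'=f\Phi$, $g'=\Psi g$, $h'=\Psi h=h\Phi$, $\sigmamap=t\Psi=\Phi t$ and the commutator formula $d\Psi-\Psi d=\Psi(dht-htd)\Psi$ are all correct and are exactly the right bookkeeping devices for the three remaining identities. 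The only caveat is that for those three identities --- $(d+t')^2=0$, the chain-map conditions for $f'$ and $g'$, and $g'f'=1+(d+t)h'+h'(d+t)$ --- you describe the reduction strategy but do not carry out the cancellations, so what you have is a correct and viable strategy (the classical one) rather than a finished proof; to make it complete you would need to execute those expansions, which is mechanical but, as you anticipate, where all the real work sits.
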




Let us next review some terminology of algebraic Morse theory.
By a \emph{based complex} of $R$-modules we mean a chain complex
$\cpx{C} $ of  $ R $-modules together with direct sum
decompositions  $ C_n = \bigoplus_{\alpha \in I_n} C_{\alpha}$ where
 $\{I_n\} $ is a family of  mutually disjoint index sets.
For $f: \bigoplus_n C_n \rightarrow \bigoplus_n C_n$ a graded map, we write
$f_{\beta,\alpha}$ for the component of $f$ going from
$C_{\alpha}$ to $C_{\beta}$, and given a based complex $\cpx{C}$ we
construct  a digraph $ \digraph{\cpx{C}} $ with vertex set
$V = \bigcup_n I_n$ and with a directed  edge
$\alpha \rightarrow \beta$ whenever the component
$d_{\beta,\alpha}$ is non-zero.

A subset $M$ of the edges of $\digraph{\cpx{C}}$ such that no
vertex is incident to more than one edge of $M$ is called
a \emph{Morse matching} if, for each edge
$\alpha \rightarrow \beta$ in $M$, the corresponding component
$d_{\beta,\alpha}$ is an isomorphism, and furthermore there is a well
founded partial order $\prec$ on each $I_n$ such that
$\gamma \prec \alpha$ whenever there is a path
$\alpha^{(n)} \rightarrow \beta \rightarrow \gamma^{(n)}$
in the graph $\morsegraph{\cpx{C}}{M}$, which is the graph obtained
from $\digraph{\cpx{C}} $ by reversing the edges from $M$.


Given the matching $M$, we define the set  $M^{0}$ to be the vertices
that are not incident to an arrow from $M$.

For $\alpha$ and $\beta$ vertices in $\morsegraph{\cpx{C}}{M}$ we can
now consider all directed paths from $\alpha$ to $\beta$. For each
such path $\gamma$, we get a map from $C_{\alpha}$ to $C_{\beta}$ by, for each
edge $\sigma \rightarrow \tau$ in $\gamma$ which is not in $M$ take the map
$d_{\tau,\sigma}$, and for each edge $\sigma \rightarrow \tau$ in
$\gamma$ which
is the reverse of an edge in $M$ take the map $-d_{\sigma,\tau}^{-1}$
and composing them. Summing these maps over all paths from $\alpha$ to
$\beta$ defines the map
$\Gamma_{\beta,\alpha}:C_{\alpha} \rightarrow C_{\beta}$.

\section{The main result}

From the based complex $\cpx{C}$ with $C_n = \bigoplus_{\alpha \in I_n} C_{\alpha}$
furnished with a Morse matching $M$, we define another based complex
$\cpx{\tilde{C}}$ by letting it be isomorphic to $\cpx{C}$ as
a graded module, and defining the differential
$\tilde{d}$ in $\cpx{\tilde{C}}$ as
\[
\tilde{d} (x) =
\begin{cases}
  d_{\beta,\alpha}(x), & \quad \text{if } \alpha \rightarrow \beta \in M, \\
  0, & \quad \text{otherwise};
\end{cases}
\qquad \text{for } x \in C_{\alpha}.
\]

We also need a based complex complex coming from the vertices in $M^0$,
so we define $\cpx{\tilde{C}}^{M}$ by
\[
\tilde{C}^{M}_n = \bigoplus_{\alpha \in I_n \cap M^{0}} C_{\alpha},
\quad d_{\cpx{\tilde{C}}^{M}} = 0,
\]
and maps
$\tilde{f}: \cpx{\tilde{C}} \rightarrow \cpx{\tilde{C}}^{M}$,
$\tilde{g}: \cpx{\tilde{C}}^{M} \rightarrow \cpx{\tilde{C}}$ and
$\tilde{h}: \cpx{\tilde{C}} \rightarrow \cpx{\tilde{C}}[1]$ given by
\begin{align*}
\tilde{f}(x) &=
\begin{cases}
  x, & \quad \text{if } \alpha \in M^{0}, \\
  0, & \quad otherwise,
\end{cases} & \\
\tilde{g}(x) &= x, & \qquad x \in C_{\alpha}.\\
\tilde{h}(x) &=
\begin{cases}
  -d^{-1}_{\alpha,\beta}(x), & \quad \text{if } \beta \rightarrow \alpha \in M, \\
  0, & \quad \text{otherwise};
\end{cases} &
\end{align*}

With this notation we can now formulate the following lemma.
\begin{Lem}\label{lem:morse_contraction}
  The diagram
  \begin{center}
    \begin{tikzpicture}
      \matrix (n) [matrix of math nodes, column sep=2em,
      nodes={asymmetrical rectangle}]
      { \cpx{\tilde{C}}^{M} & \cpx{\tilde{C}} \\ };

      \path[-stealth] ($(n-1-1.east)+(0,0.1)$) edge node [auto] {$\tilde{g}$}
        ($(n-1-2.west)+(0,0.1)$);

      \path[-stealth] ($(n-1-2.west)-(0,0.1)$) edge node [auto] {$\tilde{f}$}
        ($(n-1-1.east)-(0,0.1)$);

      \draw[-stealth] (n-1-2) to [out=30, in=330, looseness=5]
        node [auto] {$\tilde{h}$} (n-1-2);
  \end{tikzpicture}

  \end{center}
  is a contraction.
\end{Lem}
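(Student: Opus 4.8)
The plan is to verify, by a direct computation of components, the identities that define a contraction --- namely $\tilde f\tilde g = 1$, $\tilde g\tilde f = 1 + \tilde d\tilde h + \tilde h\tilde d$, $\tilde f\tilde h = 0$, $\tilde h\tilde g = 0$, $\tilde h^2 = 0$ --- together with the facts that $\tilde d^2 = 0$ (so that $\cpx{\tilde C}$ is a complex) and that $\tilde f,\tilde g$ are chain maps. Everything is organised around the partition of the vertex set supplied by a Morse matching: since $d$ has degree $-1$, the two endpoints of an edge $\alpha\to\beta\in M$ lie in consecutive degrees, and writing $U$ for the set of ``upper'' endpoints $\alpha$ and $D$ for the set of ``lower'' endpoints $\beta$, the hypothesis that no vertex meets more than one edge of $M$ makes $V = M^0\sqcup U\sqcup D$ a disjoint union in which every matched vertex has a well-defined type.

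With this in place I would record the block structure of the four maps. The map $\tilde g$ is the inclusion of the summands indexed by $M^0$ and $\tilde f$ is the complementary projection onto them; on $C_\alpha$ with $\alpha\to\beta\in M$ the map $\tilde d$ is the isomorphism $d_{\beta,\alpha}\colon C_\alpha\to C_\beta$ and vanishes on every other summand; on $C_\beta$ with $\alpha\to\beta\in M$ the map $\tilde h$ is $-d_{\beta,\alpha}^{-1}\colon C_\beta\to C_\alpha$ and vanishes on every other summand. In particular $\im\tilde d\subseteq\bigoplus_{\beta\in D}C_\beta$, $\im\tilde h\subseteq\bigoplus_{\alpha\in U}C_\alpha$ and $\im\tilde g\subseteq\bigoplus_{\alpha\in M^0}C_\alpha$, while $\tilde d$ annihilates all $D$- and $M^0$-summands, $\tilde h$ annihilates all $U$- and $M^0$-summands, and $\tilde f$ annihilates all $U$- and $D$-summands. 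Given the disjointness of $M^0,U,D$, the identities $\tilde f\tilde g = 1$, $\tilde f\tilde h = 0$, $\tilde h\tilde g = 0$, $\tilde h^2 = 0$ and $\tilde d^2 = 0$ then drop out immediately, and (since $d_{\cpx{\tilde C}^M}=0$) so does the chain-map property of $\tilde f$ and $\tilde g$, which reduces to $\tilde f\tilde d = 0$ and $\tilde d\tilde g = 0$.

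The only identity with content is $\tilde g\tilde f = 1 + \tilde d\tilde h + \tilde h\tilde d$, which I would check summand by summand according to the trichotomy. On $C_\alpha$ with $\alpha\in M^0$ both $\tilde d$ and $\tilde h$ vanish and $\tilde g\tilde f$ restricts to $\id$, so both sides equal $\id$. On $C_\alpha$ with $\alpha\to\beta\in M$ one has $\tilde f = 0$ and $\tilde h = 0$ there, while $\tilde h\tilde d|_{C_\alpha} = (-d_{\beta,\alpha}^{-1})\circ d_{\beta,\alpha} = -\id$, so the right-hand side is $\id - \id = 0 = \tilde g\tilde f$. Symmetrically, on $C_\beta$ with $\alpha\to\beta\in M$ one has $\tilde f = 0$ and $\tilde d = 0$ there, while $\tilde d\tilde h|_{C_\beta} = d_{\beta,\alpha}\circ(-d_{\beta,\alpha}^{-1}) = -\id$, so again both sides vanish.

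I do not anticipate a real obstacle: the computation is pure bookkeeping, and the only genuine use of the Morse-matching hypotheses is the disjointness $V = M^0\sqcup U\sqcup D$, which is exactly what makes $\tilde d,\tilde f,\tilde g,\tilde h$ block-diagonal with respect to the decomposition of $\cpx{\tilde C}$ into the trivial complex on the $M^0$-summands and the two-term complexes $C_\alpha\xrightarrow{d_{\beta,\alpha}}C_\beta$ attached to the edges of $M$. From that point of view the lemma just says that a direct sum of contractions --- the identity contraction on the $M^0$-part and the standard contraction of each acyclic two-term piece onto $0$ --- is again a contraction, which the componentwise check above merely unwinds. (The well-founded order $\prec$ in the definition of a Morse matching plays no role here; it is needed only afterwards, to ensure that the perturbation to which one applies the perturbation lemma is filtration-lowering.)
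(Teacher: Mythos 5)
Your proof is correct and follows essentially the same route as the paper's: a componentwise verification of the contraction identities, with the only substantive case being $\tilde g\tilde f = 1 + \tilde d\tilde h + \tilde h\tilde d$, checked according to whether a summand is unmatched, the source, or the target of an edge of $M$. The paper's proof is just a terser version of the same bookkeeping, and your closing observation that this is a direct sum of the identity contraction and the standard contractions of two-term acyclic complexes is a fair conceptual summary of it.
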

\begin{proof}
  We first need to verify that $\tilde{f}$ and $\tilde{g}$ are chain
  maps, which is readily seen. Next we check the identities
  \[
  \tilde{f} \tilde{g} = 1, \quad
  \tilde{g} \tilde{f} = 1 + \tilde{d}\tilde{h} +\tilde{h}\tilde{d}.
  \]
  The first one is obvious, and the second follows from the fact that
  for a basis element $x \in C_{\alpha}$, $\tilde{d}\tilde{h}(x) = -x$
  if there is an edge $\beta \rightarrow \alpha$ in $M$, and 0
  otherwise; and similarly $\tilde{h}\tilde{d}(x) = -x$ if there is an
  edge $\alpha \rightarrow \beta$ in $M$, and
  0 otherwise. The identities
  \[
  \tilde{h}\tilde{g} = 0, \quad \tilde{f}\tilde{h} = 0, \quad \tilde{h}^2 = 0
  \]
  follow from that vertices in $M^{0}$ are not incident
  to any edge in $M$ (the first two) and that no vertex
  is incident to more than one edge in $M$ (the third).
\end{proof}

Let us now define the perturbation $t$ on $\cpx{\tilde{C}}$ as
$t = d- \tilde{d}$, where $d$ is the differential on $\cpx{C}$, so
\[
t(x) = \sum_{\alpha\rightarrow\beta \not\in M}d_{\beta,\alpha}(x)
\] for $x \in C_{\alpha}$. This makes
$\cpx{\tilde{C}}^{t}$ and $\cpx{C}$ isomorphic as based complexes.

\begin{Lem}
  The diagram
  \begin{center}
    \begin{tikzpicture}
      \matrix (n) [matrix of math nodes, column sep=2em,
      nodes={asymmetrical rectangle}]
      { \cpx{C}^{M} & \cpx{C} \\ };

      \path[-stealth] ($(n-1-1.east)+(0,0.1)$) edge node [auto] {$g$}
        ($(n-1-2.west)+(0,0.1)$);

      \path[-stealth] ($(n-1-2.west)-(0,0.1)$) edge node [auto] {$f$}
        ($(n-1-1.east)-(0,0.1)$);

      \draw[-stealth] (n-1-2) to [out=30, in=330, looseness=5]
        node [auto] {$h$} (n-1-2);
    \end{tikzpicture}
  \end{center}
  where, for $x \in C_{\alpha}$ with $\alpha \in I_n$,
  \begin{align*}
  d_{\cpx{C}^{M}}(x) &= \sum_{\beta \in M^{0} \cap I_{n-1}} \Gamma_{\beta,\alpha}(x)
  & \qquad
  f(x) &= \sum_{\beta \in M^{0} \cap I_{n}} \Gamma_{\beta,\alpha} (x) \\
  g(x) &=  \sum_{\beta \in I_{n}} \Gamma_{\beta,\alpha} (x)
  & \qquad
  h(x) &= \sum_{\beta \in I_{n+1}} \Gamma_{\beta,\alpha} (x)
  \end{align*}
  is a filtered contraction.
\end{Lem}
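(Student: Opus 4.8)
The plan is to obtain the asserted contraction by applying the Brown--Gugenheim perturbation lemma to the contraction of Lemma~\ref{lem:morse_contraction}, using as perturbation the map $t = d - \tilde{d}$ introduced above, and then to read off that the perturbed data $(f',g',h',t')$ produced by the lemma are exactly the maps $(f,g,h,d_{\cpx{C^M}})$ written in the statement. Since $\cpx{\tilde{C}}^{t}$ is by construction isomorphic, as a based complex, to $\cpx{C}$, and $\cpx{\tilde{C}}^{M}$ and $\cpx{C^M}$ coincide as graded modules, the only genuine content is (i) producing the filtration needed to invoke the perturbation lemma, and (ii) a termwise computation of $\sigmamap = \sum_{n\ge 0} t(\tilde{h}t)^n$ matching it against the definition of $\Gamma$.

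For (i) I would build a filtration out of the well founded partial orders $\prec$. The point is that a non-matching edge $\alpha \rightarrow \beta$ followed by an edge $\beta \rightarrow \gamma$ that is the reverse of a matching edge is precisely a length-two path $\alpha^{(n)} \rightarrow \beta \rightarrow \gamma^{(n)}$ in $\morsegraph{\cpx{C}}{M}$, so that $\gamma \prec \alpha$; hence an infinite path in the digraph obtained from $\digraph{\cpx{C}}$ by contracting each matched pair to a single vertex and keeping only the non-matching edges would produce an infinite $\prec$-descending chain, which is impossible. Consequently that contracted digraph carries a well defined height function, and assigning to $C_\alpha$ the height of the class of $\alpha$ gives an exhaustive, bounded below filtration on $\cpx{\tilde{C}}$ (and, by restriction, on $\cpx{\tilde{C}}^{M}$) in which each matched pair lies in a single filtration degree and every non-matching edge strictly lowers the degree. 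With respect to this filtration $\tilde{d}$ preserves degree (it runs along matching edges), $\tilde{g}$ (an inclusion of summands) and $\tilde{f}$ (a projection onto $M^0$-summands) preserve it, $\tilde{h}$ preserves it (it moves within a matched pair), and $t$ lowers it. Verifying that this really meets the hypotheses of the perturbation lemma — in particular that the induced $\sigmamap$ comes out as a locally finite sum — is the step that needs the most care and is, I expect, the main obstacle.

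Granting (i), the perturbation lemma yields a contraction between $\cpx{\tilde{C}}^{M}$ equipped with $d+t' = t'$ and $\cpx{\tilde{C}}^{t} \cong \cpx{C}$, where $t' = \tilde{f}\sigmamap\tilde{g}$, $f' = \tilde{f} + \tilde{f}\sigmamap\tilde{h}$, $g' = \tilde{g} + \tilde{h}\sigmamap\tilde{g}$ and $h' = \tilde{h} + \tilde{h}\sigmamap\tilde{h}$. For (ii) I would expand $\sigmamap$ termwise: $t(\tilde{h}t)^n$ is the sum, over directed paths in $\morsegraph{\cpx{C}}{M}$ that alternate (non-matching edge, reverse of a matching edge, $\dots$, non-matching edge) with $n$ reversed edges, of the corresponding composite, each non-matching edge $\sigma \rightarrow \tau$ contributing $d_{\tau,\sigma}$ and each reversed matching edge $\sigma \rightarrow \tau$ contributing $-d^{-1}_{\sigma,\tau}$ — that is, exactly the rule defining $\Gamma$. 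Pre- and post-composing these with $\tilde{g}$ (which just restricts to the $M^0$-summands in the source), with the projection $\tilde{f}$, and with $\tilde{h}$ (one more reversed matching edge) turns the sums into sums over \emph{all} paths between the relevant vertices, because in $\morsegraph{\cpx{C}}{M}$ two reversed matching edges can never be consecutive — a vertex that is the source of a matching edge has no outgoing reversed matching edge — so that every path contributing to the pertinent $\Gamma_{\beta,\alpha}$ automatically has the alternating shape the series produces, with nothing missed and nothing double-counted. Tracking homological degrees then identifies $t'$ with $x \mapsto \sum_{\beta \in M^0 \cap I_{n-1}} \Gamma_{\beta,\alpha}(x)$ and $f',g',h'$ with the displayed $f,g,h$, and the contraction identities for the primed data are automatic from the perturbation lemma. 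Apart from the filtration point, what remains is the routine bookkeeping of terms and a check of signs.
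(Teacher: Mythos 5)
Your proposal follows the same route as the paper: apply the Brown--Gugenheim lemma to the contraction of Lemma~\ref{lem:morse_contraction} with the perturbation $t=d-\tilde{d}$, justify its applicability via the well-founded order (the paper merely notes that the absence of infinite paths in $\morsegraph{\cpx{C}}{M}$ makes $\tilde{h}t$ locally nilpotent, whereas you build an explicit filtration by contracting matched pairs and taking heights --- a legitimate way to meet the hypotheses as literally stated), and then identify the perturbation series termwise with the path sums $\Gamma_{\beta,\alpha}$. The paper dismisses that last identification with ``it is not so hard to see'', so your extra detail is welcome.

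One sub-step of your part (ii) is under-justified, however. To match $\sum_i t(\tilde{h}t)^i$ (and its companions) with $\Gamma_{\beta,\alpha}$, which by definition sums over \emph{all} directed paths from $\alpha$ to $\beta$, you must show that every contributing path is alternating. Your stated reason --- that two reversed matching edges cannot be consecutive, since a vertex entered along a reversed matching edge is already matched downwards --- rules out consecutive \emph{ascending} edges only. It does not by itself exclude two consecutive non-matching (descending) edges, and such configurations genuinely occur in $\morsegraph{\cpx{C}}{M}$. What saves the statement is a degree count: because every ascending edge other than the final edge of a path must be immediately followed by a descending one, any suffix of a path raises the homological degree by at most $1$; since the relevant targets $\beta$ lie in $I_{n-1}$, $I_n$ or $I_{n+1}$ and (where required) in $M^0$, a path that ever descends two steps below its eventual endpoint can never return to it, so all paths actually reaching the prescribed $\beta$ are forced into the strict zigzag shape. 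With that observation added, your identification of $t'$, $f'$, $g'$, $h'$ with the displayed maps is correct; without it, the claim that ``nothing is missed'' is not yet proved.
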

\begin{proof}
  From Lemma~\ref{lem:morse_contraction} together with the fact that there
  are no infinite paths in $\morsegraph{\cpx{C}}{M}$, the Morse graph of
  $\cpx{C}$, we can deduce that
  $ht$ is locally nilpotent, and we can thus invoke the perturbation lemma.
  It is not so hard to see that the perturbed differential on
  $\cpx{\tilde{C}}^{M}$ is given by
  \[
  d(x) = \sum_{i = 0}^{\infty} t(ht)^i(x)
  = \sum_{\beta \in M^{0}\cap I_{n-1}} \Gamma_{\beta,\alpha} (x)
  \]
  and the maps $f$, $g$ and $h$ by
  \begin{align*}
    f(x) &= \sum_{i = 0}^{\infty} f(ht)^i (x)
    = \sum_{\beta \in M^{0}\cap I_{n}} \Gamma_{\beta,\alpha} (x) \\
    g(x) &= \sum_{i = 0}^{\infty} g(ht)^i (x)
    = \sum_{\beta \in I_{n}} \Gamma_{\beta,\alpha} (x) \\
    h(x) &= \sum_{i = 0}^{\infty} (ht)^ih(x)
    = \sum_{\beta \in I_{n+1}} \Gamma_{\beta,\alpha} (x) \\
  \end{align*}
  where $x \in C_{\alpha}$.
\end{proof}

The above result is also shown (without the use of the perturbation
lemma) in \cite{berglund:hpt_morse} using a result from
\cite{joellenbeck_welker:morse}.

From the preceding lemma, the main result of algebraic Morse theory
now follows.
\begin{Thm}
  Let $\cpx{C}$ be a based complex with a Morse matching $M$, then
  there is a differential on the graded module
  $\bigoplus_{\alpha \in M^0} C_{\alpha}$ such that the resulting
  complex is homotopy equivalent to $\cpx{C}$.
\end{Thm}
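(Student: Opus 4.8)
The plan is to read the theorem directly off the preceding lemma, so that almost no new work is needed. First I would observe that the complex $\cpx{C}^{M}$ appearing there has underlying graded module exactly $\bigoplus_{\alpha \in M^{0}} C_{\alpha}$: it is obtained from $\cpx{\tilde{C}}^{M}$, whose degree-$n$ part is $\bigoplus_{\alpha \in I_n \cap M^{0}} C_{\alpha}$, by replacing the zero differential with the perturbed differential $t' = f \sigmamap g$. Since $t'$ has degree $-1$ and, by the perturbation lemma, $(0 + t')^2 = 0$, the map $d_{\cpx{C}^{M}}(x) = \sum_{\beta \in M^{0} \cap I_{n-1}} \Gamma_{\beta,\alpha}(x)$ is a genuine differential on $\bigoplus_{\alpha \in M^{0}} C_{\alpha}$.

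Second, I would note that a contraction is in particular a chain homotopy equivalence: the identities $fg = 1$ and $gf = 1 + dh + hd$ say precisely that $f$ and $g$ are chain maps with $fg = 1$ and with $gf$ homotopic to the identity via $h$. Applying this to the filtered contraction furnished by the preceding lemma---and recalling that its right-hand complex is $\cpx{\tilde{C}}^{t}$, which is isomorphic as a based complex to $\cpx{C}$---gives that $\cpx{C}^{M}$ and $\cpx{C}$ are homotopy equivalent. Taking the differential on $\bigoplus_{\alpha \in M^{0}} C_{\alpha}$ to be $d_{\cpx{C}^{M}}$ then yields exactly the assertion.

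I do not expect any real obstacle here: the substantive content---local nilpotence of $ht$, which legitimises the application of the perturbation lemma, together with the identification of the perturbed maps $f$, $g$, $h$, $t'$ with the path sums $\Gamma_{\beta,\alpha}$---has already been established in the proof of the preceding lemma. All that remains is to forget the auxiliary data (the maps $f$, $g$, $h$ and the filtration) and retain only the resulting homotopy equivalence.
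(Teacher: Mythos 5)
Your proposal is correct and matches the paper exactly: the paper simply states that the theorem follows from the preceding lemma, and your argument spells out precisely why (the contraction identities give the homotopy equivalence, and $\cpx{C}^{M}$ has underlying graded module $\bigoplus_{\alpha \in M^{0}} C_{\alpha}$). No gaps.
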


\bibliographystyle{amsalpha}
\bibliography{bibfil}

\end{document}